\newcommand{\R}{\mathbb{R}}
\newcommand{\C}{\mathbb{C}}
\newtheorem{theorem}{Theorem}[section]
\newtheorem{notation}[theorem]{Notation}
\newtheorem{cor}[theorem]{Corollary}
\newtheorem{rem}[theorem]{Remark}
\newtheorem{defi}[theorem]{Definition}
\newtheorem{exa}[theorem]{Example}
\newcommand\bfootnote[1]{%
  \begingroup
  \renewcommand\thefootnote{}\footnote{#1}%
  \addtocounter{footnote}{-1}%
  \endgroup
}
\title{Minimal Projections with respect to Numerical Radius}
\author{Asuman G\"{u}ven  Aksoy and Grzegorz Lewicki}
\date{}
\begin{document}
\maketitle
\nocite{*}
\begin{abstract}
In this paper we survey some results on minimality of projections with respect to numerical radius.  We note that in the cases $L^p$, $p=1,2,\infty$, there is no difference between the minimality of projections measured either with respect to operator norm or with respect to numerical radius.  However, we give an example of a projection from $l^p_3$ onto a two-dimensional subspace which is minimal with respect to norm, but not with respect to numerical radius for $p\neq 1,2,\infty$.  Furthermore, utilizing a theorem of Rudin and motivated by Fourier projections, we give a criterion for minimal projections, measured in numerical radius. Additionally, some results concerning strong unicity of minimal projections with respect to numerical radius are given.
\bfootnote{2010 {\em Mathematics Subject Classification}: Primary 41A35,41A65, Secondary 47A12.\\
{\em Key words and phrases}: numerical radius, minimal projection, diagonal extremal pairs, Fourier projection.}
\end{abstract}

\section{Introduction}
A projection from a normed linear space $X$ onto a subspace $V$ is a bounded linear operator $P:X \to V$ having the property that $P_{|_{V}}=I$.  $P$ is called a \emph{minimal projection} if $\|P\|$ is the least possible.  Finding a minimal projection of  the least norm has its obvious connection to approximation theory, since for any $P \in \mathcal{P}(X,V)$, the set of all projections from $X$ onto $V$, and $x \in X$, from the inequality:
\begin{equation}
\|x-Px\|\leq (\|Id -P\|)\, dist(x,V) \leq (1+ \|P\|)\, dist(x,V),
\end{equation}
one can deduce that $Px$ is a good approximation to $x$ if $\|P\|$ is small. 
Furthermore, any minimal projection $P$ is an extension of $Id_V$ to the space $X$  of the smallest possible norm, which can be interpreted as a Hahn-Banach extensions.  In general, a given subspace will not be the range of a projection of norm 1, and the projection of least norm is difficult to discover even if its existence is known \textit{a priori}.  For example, the minimal projection of $C[0,1]$ onto the subspace $\Pi_3$ of polynomials of degree $\leq 3$ is unknown.  For an explicit determination of the projection of minimal norm from the subspace $C[-1,1]$ onto $\Pi_2$, see \cite{Chalmers-Metcalf}.
However, it is known that, see \cite{Cheney-Morris}, for a Banach space $X$ and subspace $V \subset X$, $V=Z^*$ for some Banach space $Z$, then there exists a minimal projection $P:X \to Z$.  A well known example of a minimal projection, \cite{Lozinskii}, 
is Fourier projection $F_m:C(2\pi)\to \Pi_M:=\text{span}\{1,\sin x,\cos x, \dots, \sin mx,\cos mx\}$ defined as 
\begin{equation}
F_m(f)=\sum_{k=0}^m \alpha_k \cos kx+\sum_{k=0}^m \beta_k \sin kx
\end{equation}
where $\alpha_k,\beta_k$ are Fourier coefficients and $C(2\pi)$ denotes $2\pi-$periodic, real-valued functions equipped with the sup norm.  For uniqueness of minimality of Fourier projection also see \cite{Fisher-Morris-Wulbert}.  Let $X$ be a Banach space over $\R$ or $\C$.  We write $B_{X}(r)$ for a closed ball with radius $r>0$ and center at $0$ ($B_X$ if $r=1$) and $S_X$ for the unit sphere of $X$.  The dual space of $X$ is denoted by $X^*$ and the Banach algebra of all continuous linear operators going from $X$ into a Banach space $Y$ is denoted by $B(X,Y)$ ($B(X)$ if $X=Y$).
\par
The numerical range of a bounded linear operator $T$ on $X$ is a subset of a scalar field, constructed in such a way that it is related to both algebraic and norm structures of the operator, more precisely:
\begin{defi}
The {\emph numerical range} $T \in \vec{B}(X)$ is defined by 
\begin{equation}
W(T)=\{x^*(Tx):x \in S_X,x^* \in S_{X^*},x^*(x)=1\}.
\end{equation}
\end{defi}
Notice that the condition $x^*(x)=1$ gives us that $x^*$ is a norm attaining functional.
\par
The concept of a numerical range comes from Toepliz's original definition of the \emph{field of values} associated with a matrix, which is the image of the unit sphere under the quadratic form induced by the matrix $A$:
\begin{equation}
F(A)=\{x^*Ax:\|x\|=1,x \in \C^n\},
\end{equation}
where $x^*$ is the original conjugate transform and $\|x\|$ is the usual Euclidean norm.  It is known that the classical numerical range 
of a matrix always contains the spectrum,  and as a result study of numerical range can help understand properties that depend on the location of the eigenvalues such as stability and non-singularity of matrices.  In case $A$ is a normal matrix, then the numerical range is the polygon in the complex plane whose vertices are eigenvalues of $A$.  In particular, if $A$ is hermitian, then the polygon reduces to the segment on the real axis bounded by the smallest and largest eigenvalue, which perhaps explains the name numerical range.
\par
The \emph{numerical radius} of $T$ is given by 
\begin{equation}
\|T\|_w=\sup\{|\lambda|:\lambda \in W(T)\} .
\end{equation}
Clearly $\|T||_w$ is a semi-norm on $B(X)$ and $\|T\|_w \leq \|T||$ for all $T \in B(X)$.  For example, if we consider $T: \C^n \to \C^n$ as  a right shift operator 
$$
T(f_1,f_2\dots,f_n)=(0,f_1,f_2,\dots,f_{n-1})
$$ 
then $\langle Tf,f\rangle=f_1\overline{f}_2+f_2\overline{f}_3+\dots f_{n-1}\overline{f}_n$ and consequently to find $\|T\|_w$ we must find $\sup\{|f_1||f_2|+\dots+|f_{n-1}||f_n|\}$ subject to the condition $\displaystyle \sum_{i=1}^n |f_i|^2=1$.  The solution to this ``Lagrange multiplier" problem is 
\begin{equation}
 \|T\|_w=\cos\left(\frac{\pi}{n+1}\right).
\end{equation}
\par
The \emph{numerical index} of $X$ is then given by 
\begin{equation}
n(X)=\inf\left\{\|T\|_w:T\in S_{B(X)}\right\}.
\end{equation}
Equivalently, the numerical index $n(X)$ is the greatest constant $k\geq 0$ such that $k\|T\|\leq \|T\|_w$ for every $T \in B(X)$.  Note also that $0\leq n(X)\leq 1$ and $n(X)>0$ if and only if $\|\cdot \|_w$ and $\|\cdot \|$ are equivalent norms.  The concept of numerical index was first introduced by Lumer \cite{lumer} in 1968.  Since then much attention has been paid to the constant of equivalence between the numerical radius and the usual norm of the Banach algebra of all bounded linear operators of  a Banach space. 
Two classical books devoted to these concepts are \cite{Bonsall-Duncan1} and \cite{Bonsall-Duncan2}. For more recent results we refer the reader to \cite{Aksoy-Lewicki3},\cite{Martin},\cite{Martin-Meri-Popov} and \cite{Gustafson-Rao}.
\par
In this paper, we study minimality of projections with respect to numerical radius.  Since operator norm of $T$ is defined as $\|T\|=\sup|\langle Tx,y\rangle|$ with $(x,y) \in B(X) \times B(X^*)$, while numerical radius $\|T\|_w=\sup| \langle Tx,\ y \rangle|$ with $(x,y) \in B(X) \times B(X^*)$ and $\langle x,y\rangle=1$, $\|T\|$ is bilinear and $\|T\|_w$ is quadratic in nature.  However, $\|T\|_w\leq \|T\|$ implies that there are more spaces for which $\|T\|\geq 1$ but $\|T\|_w=1$.  
\par
Furthermore, if $T$ is a bounded linear operator on a Hilbert space $H$, then the numerical radius takes the form
\begin{equation}
\|T\|_w=\sup\{|\langle Tx,x\rangle|:\|x\|=1\}.
\end{equation}
This follows from the fact that for each linear functional $x^*$ there is a unique $x_0 \in H$ such that $x^*(x)=\langle x,x_0\rangle$ for all $x \in H$.  Moreover, if $T$ is self-adjoint or a normal operator on a Hilbert space $H$, then 
\begin{equation}
\|T\|_w=\|T\|.
\end{equation}
Also, if a non-zero $T:H \to H$ is self-adjoint and compact, then $T$ has an eigenvalue $\lambda$ such that 
\begin{equation}
\|T\|_w=\|T\|=\lambda.
\end{equation}
These properties of numerical radius together with the desirable properties of diagonal projections from Hilbert spaces onto closed subspaces proved motivation to investigate minimal projections with respect to numerical radius.
\section{Characterization of Minimal Numerical Radius Projections}
In \cite{Aksoy-Chalmers}
a characterization of minimal numerical radius extension of operators from a Banach space $X$ onto its finite dimensional subspace $V=[v_1,v_2,\dots,v_n]$ is given.  To express this theorem, we first set up our notation.
\begin{notation}
Let $\displaystyle T=\sum_{i=1}^n u_i \otimes v_i: V \to V$ where $u_i \in V^*$ and its extension to $X$ is denoted by $\widetilde{T}:X \to V$ and defined as 
\begin{equation}
\widetilde{T}=\sum_{i=1}^n \tilde{u}_i \otimes v_i,
\end{equation}
where $\widetilde{u}_i \in X^*$.
\end{notation}
\begin{defi}
Let $X$ be a Banach space.  If $x \in X$ and $x^* \in X^*$ are such that 
\begin{equation}
|\langle x,x^*\rangle|=\|x\|\|x^*\|\neq 0,
\end{equation}
then $x^*$ is called an \emph{extremal} of $x$ and  written as  $x^*=ext \, x$.  Similarity, $x$ is an extremal of $x^*$.  We call $(ext\, y,y) \in S_{X^{**}}\times S_{X^*}$ a \emph{diagonal extremal pair} for $\widetilde{T} \in B(X,V)$ if 
\begin{equation}
\langle \widetilde{T}^{**}x,y\rangle=\|\widetilde{T}\|_w,
\end{equation}
where $\widetilde{T}^{**}:X^{**}\to V$ is the second adjoint extension of $\widetilde{T}$ are $V=[v_1,\dots,v_n]\subset X$.  In other words, the map $\widetilde{T}$ has the expression $\displaystyle \widetilde{T}=\sum_{i=1}^n \tilde{u}_i \otimes v_i: X \to V$ and 
\begin{equation}
\widetilde{T}x=\sum_{n=1}^n\langle x,\tilde{u}_i\rangle v_i
\end{equation}
where $\tilde{u}_i \in X^*$, $v_i \in V$ and $\langle x,\tilde{u}_i\rangle$ denotes the functional $\tilde{u}_i$ is acting on $x$ and 
\begin{equation}
\widetilde{T}^{**}x=\sum_{i=1}^n \langle u_i,x\rangle v_i,
\end{equation}
$u_i \in X^{***}$, $v_i \in V$, $x \in X^{**}$.
\end{defi}
The set of all diagonal extremal pairs will be denoted by $\mathcal{E}_w(\widetilde{T})$ and defined as:
\begin{equation}
\mathcal{E}_w(\widetilde{T})=\left\{(ext \, y,y) \in S_{X^{**}}\times S_{X^*} : \ \|\widetilde{T}\|_w=\sum_{i=1}^n\langle ext \, y,u_i\rangle \cdot \langle v_i,y\rangle\right\}.
\end{equation}
Note that to each $(x,y) \in X^{**} \times X^*$ we associate the rank-one operator $y \otimes x:X \to X^{**}$ given by
\begin{equation}
(y \otimes x)(z)=\langle z,y \rangle x {\hskip 2em} \text{for} \, \, z \in X.
\end{equation}
Accordingly, to each $(x,y) \in \mathcal{E}_w(\widetilde{T})$ we can associate the rank-one operator $y\otimes ext \, y:X \to X^{**}$ given by 
\begin{equation}
(y\otimes ext\, y)(z)=\langle z,y\rangle ext\, y.
\end{equation}
By $\mathcal{E}(\widetilde{T})$ we denote the usual set of all extremal pairs for $\widetilde{T}$ and 
\begin{equation}
\mathcal{E}(\widetilde{T})=\left\{(x,y)\in S_{X^{**}}\times S_{X^*}:\|\widetilde{T}\|=\sum_{i=1}^n\langle x,u_i\rangle \cdot \langle v_i,y\rangle \right\}.
\end{equation}
In case of diagonal extremal pairs we require $|\langle ext\, y,y\rangle |=1$. \begin{defi}
Let $\displaystyle T=\sum_{i=1}^n u_i \otimes v_i: V \to V=[v_1,v_2,\dots,v_n]\subset X$, where $u_i \in V^*$.  Let $\displaystyle\widetilde{T}:\sum{i=1}^n \tilde{u}_i\otimes v_i: X \to V$ be an extension of $T$ to all of $X$.  We say $\widetilde{T}$ is a \emph{minimal numerical extension} of $T$ if 
\begin{equation}
\|\widetilde{T}\|=\inf\left\{\|S\|_w:S:X \to V \ ; \ S_{|_V} =T\right\}.
\end{equation}
Clearly $\|T\|_w\leq \|\widetilde{T}\|_w.$
\end{defi}
\begin{theorem} (\cite{Aksoy-Chalmers})
$\widetilde{T}$ is a minimal radius-extension of $T$ if an only if the closed convex hull of $\{y\otimes x\}$ where $(x,y) \in \mathcal{E}_w(\widetilde{T})$ contains an operator for which $V$ is an invariant subspace.
\end{theorem}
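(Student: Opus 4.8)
The plan is to treat this as a constrained convex optimization problem, establish a Kolmogorov-type optimality criterion for it, and then translate that criterion into the stated geometric condition by a Hahn--Banach separation argument. Fix one extension $\widetilde{T}_0$ of $T$; every other extension differs from it by an \emph{admissible direction}, i.e. by an operator $S:X\to V$ with $S_{|_V}=0$. These form a linear subspace $\mathcal{W}\subset B(X,V)$, and minimizing $\|\cdot\|_w$ over the affine set $\widetilde{T}_0+\mathcal{W}$ is a convex problem because $\|\cdot\|_w$ is a seminorm.

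First I would exhibit the numerical radius as a support function. Writing
\begin{equation}
\|\widetilde{T}\|_w=\sup\{|\langle \widetilde{T}^{**}x,y\rangle|:x\in S_{X^{**}},\ y\in S_{X^*},\ \langle x,y\rangle=1\}
\end{equation}
presents $\|\cdot\|_w$ as a supremum of linear functionals of $\widetilde{T}$ indexed by a set that is compact in the relevant weak-$*$ topologies; passing to the second adjoint $X^{**}$ is precisely what guarantees, via Alaoglu's theorem, that this supremum is attained. The active indices at $\widetilde{T}_0$ are exactly the diagonal extremal pairs $\mathcal{E}_w(\widetilde{T}_0)$ (the definition already fixes the unimodular phase of $y$ so that the value is the positive real number $\|\widetilde{T}_0\|_w$). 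Consequently the one-sided directional derivative of the seminorm is the maximum of the active functionals,
\begin{equation}
\lim_{t\to 0^+}\frac{\|\widetilde{T}_0+tS\|_w-\|\widetilde{T}_0\|_w}{t}=\max_{(x,y)\in\mathcal{E}_w(\widetilde{T}_0)}\operatorname{Re}\langle S^{**}x,y\rangle,
\end{equation}
so $\widetilde{T}_0$ is a minimal numerical-radius extension if and only if this derivative is nonnegative for every $S\in\mathcal{W}$.

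The crux is the computation identifying the annihilator of $\mathcal{W}$. Pairing an admissible $S=\sum_j f_j\otimes w_j$ (with $f_j\in V^\perp$, $w_j\in V$) against a rank-one operator $y\otimes x$ via $\langle S,y\otimes x\rangle:=\langle S^{**}x,y\rangle$ gives, for a convex combination $A=\sum_k\lambda_k\,y_k\otimes x_k$,
\begin{equation}
\langle S,A\rangle=\sum_j\langle A(w_j),f_j\rangle,\qquad A(w)=\sum_k\lambda_k\langle w,y_k\rangle x_k.
\end{equation}
Since the $f_j\in V^\perp$ and $w_j\in V$ are arbitrary, $A$ annihilates all of $\mathcal{W}$ if and only if $A(w)$ kills every functional in $V^\perp$ for each $w\in V$; as $V$ is finite dimensional one has $(V^\perp)^\perp=V$ inside $X^{**}$, so this says exactly that $A(V)\subseteq V$, i.e. that $V$ be an invariant subspace for $A$. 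This is the step that converts the analytic optimality condition into the geometric statement of the theorem.

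Finally I would close the loop by separation. Put $\mathcal{K}=\{y\otimes x:(x,y)\in\mathcal{E}_w(\widetilde{T}_0)\}$. The derivative criterion says $\max_{R\in\mathcal{K}}\operatorname{Re}\langle S,R\rangle\ge 0$ for every $S\in\mathcal{W}$; applying this to $S$ and to $-S$ shows that for each $S\in\mathcal{W}$ the real pairing with $\overline{\operatorname{conv}}(\mathcal{K})$ straddles $0$, so no functional arising from $\mathcal{W}$ strictly separates $0$ from $\overline{\operatorname{conv}}(\mathcal{K})$. By the Hahn--Banach separation theorem, using the weak-$*$ compactness of $\overline{\operatorname{conv}}(\mathcal{K})$ to secure \emph{strict} separation in the failing case, non-separability is equivalent to $\overline{\operatorname{conv}}(\mathcal{K})$ meeting the annihilator $\mathcal{W}^\perp$, which by the previous paragraph is the same as containing an operator leaving $V$ invariant. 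I expect the main obstacle to be the rigorous justification of the directional-derivative formula and of the attainment and compactness needed for strict separation; both rest on the passage to $X^{**}$, which simultaneously guarantees that the extremal element $\operatorname{ext}y\in X^{**}$ exists and that the governing convex sets are weak-$*$ compact. Once these topological points are settled, the three steps combine to yield the stated equivalence.
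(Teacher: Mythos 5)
Your overall architecture coincides with the paper's: minimality of the extension is recast as the statement that $0$ best approximates $\widetilde{T}_0$ (in numerical radius) from the subspace $\mathcal{W}$ of operators vanishing on $V$, optimality is characterized dually over the extremal pairs, and the condition ``annihilates $\mathcal{W}$'' is translated into ``leaves $V$ invariant'' via $(V^\perp)^\perp=V$ for finite-dimensional $V$. The paper performs the middle step by invoking Singer's identification theorem for best approximation in $C(K)$ on a compact set $K_w$ of diagonal pairs, whereas you inline the same duality by a Danskin-type directional-derivative formula plus Hahn--Banach separation; that substitution is legitimate in principle, and your annihilator computation is exactly the invariance translation the theorem asserts.

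The genuine gap is the compactness claim on which both your derivative formula and your strict-separation step rest. You assert that the index set $\{(x,y)\in S_{X^{**}}\times S_{X^*}:\langle x,y\rangle=1\}$ is compact ``via Alaoglu's theorem.'' Alaoglu gives weak-$*$ compactness of $B_{X^{**}}\times B_{X^*}$, but the diagonal constraint $\langle x,y\rangle=1$ is \emph{not} weak-$*$ closed in that product, because the pairing of $X^{**}$ with $X^*$ is not jointly weak-$*$ continuous; indeed $y\mapsto\langle x,y\rangle$ is weak-$*$ continuous only when $x$ lies in (the canonical image of) $X$. Concretely, take $X=c_0$, let $y_n\in\ell_1=X^*$ be the $n$-th coordinate functional and $e=(1,1,1,\dots)\in\ell_\infty=X^{**}$; then each $(e,y_n)$ is a diagonal pair ($\langle e,y_n\rangle=1$, all norms one), yet $(e,y_n)\to(e,0)$ in the product of weak-$*$ topologies, and $(e,0)$ is not a diagonal pair. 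So the set you need is not even closed; without compactness, the supremum over $X^{**}$-pairs need not be attained, $\mathcal{E}_w(\widetilde{T}_0)$ could a priori be empty, the ``max over active indices'' in your derivative formula is unjustified, and the weak-$*$ compactness you invoke to get strict separation is missing. This is precisely the point the paper treats as the technical heart of the argument: it works with the augmented set $K_w=K\cap Diag=\{(x,y)\in B(X^{**})\times B(X^*): x=ext(y)\ \text{or}\ x=0\}$ and a dedicated compactness/attainment lemma (deferred to Aksoy--Chalmers), not a bare appeal to Alaoglu. To complete your proof you would have to establish such a lemma, e.g.\ by showing that every limit point of diagonal pairs that fails to be diagonal is degenerate (all the relevant bilinear functions are strictly submaximal there for any operator in play), so that suprema and active sets may be computed on a genuinely compact set. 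A further, smaller, inaccuracy: your parenthetical claim that one may ``fix the unimodular phase of $y$'' is false --- replacing $(x,y)$ by $(\omega^{-1}x,\omega y)$ preserves both $\langle x,y\rangle=1$ and the value $\langle\widetilde{T}^{**}x,y\rangle$, so the phase (or, in the real case, the sign) of the attained value cannot be normalized away; the positivity built into $\mathcal{E}_w$ is a substantive convention that your argument must track, not a harmless normalization.
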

\begin{theorem}
$P$ is a minimal projection from $X$ onto $V$ if and only if the closed convex hull of $\{y\otimes x\},$ where $(x,y) \in \mathcal{E}_w(P)$ contains an operator for which $V$ is an invariant subspace.
\end{theorem}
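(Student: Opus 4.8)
The plan is to obtain this theorem as a direct specialization of the preceding theorem of Aksoy and Chalmers. The bridge between the two statements is the elementary observation that a projection $P\colon X\to V$ is nothing other than an extension to $X$ of the identity operator on $V$: the defining property $P_{|_V}=I$ says precisely that the restriction of $P$ to $V$ equals $Id_V$, so every projection onto $V$ arises as an extension $\widetilde{T}$ with $T=Id_V$, and conversely every extension of $Id_V$ to $X$ is a projection onto $V$.

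First I would fix a basis $v_1,\dots,v_n$ of $V$ together with its dual basis $u_1,\dots,u_n\in V^*$, so that the identity on $V$ admits the representation $Id_V=\sum_{i=1}^n u_i\otimes v_i$ demanded by the Notation introduced above. With $T=Id_V$, any extension $\widetilde{T}=\sum_{i=1}^n\tilde{u}_i\otimes v_i$ satisfying $\widetilde{T}_{|_V}=T$ is exactly a projection $P$ of $X$ onto $V$, and therefore the two admissible families coincide:
\begin{equation*}
\{S\colon X\to V \ : \ S_{|_V}=Id_V\}=\mathcal{P}(X,V).
\end{equation*}

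Next I would match the two notions of minimality. By the definition of a minimal numerical extension, $\widetilde{T}=P$ is a minimal numerical extension of $T=Id_V$ precisely when $\|P\|_w$ equals $\inf\{\|S\|_w\colon S_{|_V}=Id_V\}$, and by the previous step this infimum is taken over all projections onto $V$. This is exactly the statement that $P$ has least numerical radius among all projections onto $V$, i.e.\ that $P$ is a minimal projection with respect to numerical radius in the sense of the present section. Moreover, the set $\mathcal{E}_w(P)$ of diagonal extremal pairs and the associated rank-one operators $y\otimes x$ are defined verbatim as in the theorem of Aksoy and Chalmers, with $u_i,v_i$ the data fixed above.

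With these identifications in place, the theorem of Aksoy and Chalmers applied to $T=Id_V$ and $\widetilde{T}=P$ yields the equivalence of ``$P$ is a minimal numerical extension of $Id_V$'' with ``the closed convex hull of $\{y\otimes x\colon(x,y)\in\mathcal{E}_w(P)\}$ contains an operator for which $V$ is an invariant subspace,'' which is exactly the asserted statement. The argument is in essence a translation, so I do not expect a genuine analytic obstacle; the only points requiring care are the verification that $Id_V$ really fits the $\sum u_i\otimes v_i$ template and that numerical-radius minimality in the definition coincides with having least numerical radius over $\mathcal{P}(X,V)$. Once this bookkeeping is in place, no further work is needed.
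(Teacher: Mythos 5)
Your proposal is correct and follows essentially the same route as the paper: the paper's own proof begins precisely by taking $T=I$ and $\widetilde{T}=P$ in the preceding Aksoy--Chalmers extension theorem, which rests on the same observation you make, namely that the extensions of $Id_V$ mapping $X$ into $V$ are exactly the projections in $\mathcal{P}(X,V)$. The only difference is presentational: rather than invoking that theorem as a black box, the paper indicates how its underlying machinery (the reformulation as best approximation from the space $\mathcal{D}$ of operators vanishing on $V$, Singer's identification theorem, and compactness of the diagonal set $K_w$ guaranteeing that the extremal pairs exist) adapts to the projection case, but the mathematical content coincides with your specialization.
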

\begin{proof}
By taking $T=I$ and $\widetilde{T}=P$ one can appropriately modify the proof given in \cite{Aksoy-Chalmers} without much difficulty.  The problem is equivalent to the best approximation 
in the numerical radius of a fixed operator from the space of operator 
\begin{equation*}
\mathcal{D}=\{\Delta \in \mathcal{B}:\Delta=0 \ \text{on} \ V\}=sp\{\delta \otimes v: \delta \in V^\perp; v \in V\}.
\end{equation*}
  One of the main ingredients of the proof is Singer's identification theorem (\cite{Singer}, Theorem 1.1 (p.18) and Theorem 1.3 (p.29)) of finding the minimal operator 
as the error of best approximation in C(K) for $K$ Compact.  In the case 
of numerical radius, one considers $K_w=K \cap Diag=\{(x,y)\in B(X^{**})\times B(X^*):x=ext(y) \ \text{or} \ x=0\}$  and shows $K_w$ is compact.  Thus the set $\mathcal{E}(P)$, being the set of points where a continuous (bilinear) function achieves 
its maximum on a compact set, is not empty.  For further details see \cite{Aksoy-Chalmers}.
\end{proof}
\begin{theorem} (When minimal projections coincide)
In case $X=L^p$ for $p=1,2,\infty$, the minimal numerical radius projections and the minimal operator norm projections coincide with the same norms.
\end{theorem}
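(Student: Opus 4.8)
The plan is to split the three exponents into two groups governed by quite different features of the numerical radius. For $p=1$ and $p=\infty$ I would argue through the numerical index. It is classical that $n(L^1(\mu))=n(L^\infty(\mu))=1$: indeed $L^\infty$ is a commutative $C^*$-algebra, hence of the form $C(K)$, while $L^1$ is an $L$-space, and both are standard examples of spaces of numerical index one. By the equivalent description of $n(X)$ recalled in the Introduction, $n(X)=1$ means precisely that $\|T\|=\|T\|_w$ for every $T\in B(X)$. The two extremal problems $\inf\{\|P\|:P\in\mathcal{P}(X,V)\}$ and $\inf\{\|P\|_w:P\in\mathcal{P}(X,V)\}$ then have literally the same objective function, so their optimal values and their sets of minimisers coincide; nothing more is needed in these cases.

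For $p=2$, where $X=L^2$ is a Hilbert space, the identity $\|T\|=\|T\|_w$ fails in general (in the real case $n(L^2)$ is even $0$), so the argument must exploit the structure of projections rather than the index. Let $P_0$ be the orthogonal projection onto $V$. Since $P_0$ is self-adjoint, the property recorded in the Introduction gives $\|P_0\|_w=\|P_0\|=1$. Conversely, for any projection $Q\in\mathcal{P}(L^2,V)$ and any unit vector $v\in V$ one has $\langle Qv,v\rangle=\langle v,v\rangle=1$, so $\|Q\|_w\ge 1$. Thus $P_0$ attains the least possible numerical radius, namely $1$, and it is simultaneously the unique minimal operator-norm projection onto $V$, with $\|P_0\|=\|P_0\|_w=1$; this already matches the optimal values of the two problems.

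The step I expect to be the main obstacle is to show that $P_0$ is the \emph{only} projection with numerical radius $1$, so that the two minimising sets genuinely coincide and not merely their optimal values. For this I would prove that every non-orthogonal projection $Q$ onto $V$ satisfies $\|Q\|_w>1$. Because $Q$ is non-orthogonal, its kernel $W=\ker Q$ is not orthogonal to $V$, so there exist $v\in V$ and $w\in W$ with $\langle v,w\rangle\neq 0$; choosing an orthonormal basis $\{e_1,e_2\}$ of $\operatorname{span}\{v,w\}$ with $e_1=v$, one checks that the compression of $Q$ to this plane has matrix $\bigl(\begin{smallmatrix}1 & c\\ 0 & 0\end{smallmatrix}\bigr)$ with $c\neq 0$. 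By the elliptical range theorem (or a direct maximisation of $|\langle Qx,x\rangle|$ over unit $x$ in the plane) its numerical radius equals $(1+\sqrt{1+|c|^2})/2>1$, and since $\|Q\|_w$ is a supremum over \emph{all} unit vectors this is already a lower bound. Hence $P_0$ is the unique minimiser in numerical radius, coinciding with the unique minimal operator-norm projection. The delicate point is precisely the two-dimensional reduction, namely verifying that the required plane can be located exactly when $Q$ fails to be orthogonal; once this is set up the computation is elementary and identical over $\R$ and $\C$.
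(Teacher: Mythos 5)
Your proposal is correct. For $p=1,\infty$ it coincides with the paper's own proof: both invoke the classical fact that $n(L^1)=n(L^\infty)=1$ (Bonsall--Duncan), so that $\|T\|=\|T\|_w$ for every operator, and the two minimization problems have the same objective function, hence the same optimal value and the same minimizers. The difference is at $p=2$. The paper's $L^2$ argument is only a two-line sketch: it records that a self-adjoint operator satisfies $\|P\|=\|P\|_w=|\lambda|$ with $\lambda$ the maximal (in modulus) eigenvalue, which in substance only says that the orthogonal projection $P_0$ has equal operator norm and numerical radius, namely $1$; it does not show that an arbitrary $Q\in\mathcal{P}(L^2,V)$ satisfies $\|Q\|_w\ge 1$, nor that the two sets of minimizers coincide. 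Your argument supplies exactly these missing steps: the identity $\langle Qv,v\rangle=1$ for unit $v\in V$ gives the lower bound $\|Q\|_w\ge 1$, and your compression of a non-orthogonal $Q$ to the plane spanned by a unit vector $v\in V$ and a kernel vector not orthogonal to $V$ --- with matrix $\bigl(\begin{smallmatrix}1 & c\\ 0 & 0\end{smallmatrix}\bigr)$, $c\neq 0$, whose numerical radius $(1+\sqrt{1+|c|^2})/2$ exceeds $1$ in both the real and complex cases --- shows that $P_0$ is the unique numerical-radius minimizer. The reduction itself is sound: $Qe_1=e_1$ and $Qe_2=ce_1$ with $c\neq 0$ precisely because the kernel fails to be $V^\perp$, and the numerical radius of a compression is dominated by that of the operator since the defining supremum for $\|Q\|_w$ ranges over all unit vectors, including those lying in the chosen plane. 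Combined with the classical fact (which you use tacitly and should cite or prove) that a norm-one projection on a Hilbert space is automatically orthogonal, this establishes the ``coincide'' clause of the theorem in the Hilbert case --- the part the paper's proof asserts but does not actually verify. In short: same skeleton, but your $p=2$ treatment is genuinely more complete than the paper's, at the modest cost of the elliptical-range computation.
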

\begin{proof}
In case of $L^2$, for any self-adjoint operator, we have 
\begin{equation}
\|P\|=\|P\|_w=|\lambda|,
\end{equation}
where $\lambda$ is the maximum (in modulus) eigenvalue. In this case, 
\begin{equation}\|P\|=\|P\|_w=|\langle Px,x\rangle |,
\end{equation}
 where $x$ is a norm-1 ``maximum" eigenvector.  
\par
When $p=1,\infty$, it is well known that $n(L^p)=1$ (\cite{Bonsall-Duncan1}, section 9)
thus 
\begin{equation}
\|P\|=\|P\|_w.
\end{equation}
\end{proof}
\begin{exa}
The projection $P:l_3^p \to [v_1,v_2]=V$ where $v_1=(1,1,1)$ and $v_2=(-1,0,1)$ is minimal with respect to the operator 
norm, but not minimal with respect to numerical radius for $1<p<\infty$ and $p \neq 2$. Let us denote by $P_o,P_m$ projections minimal with respect to operator norm and numerical radius respectively.  In other words
\begin{align*}
\|P_o\|&=\inf\left\{\|P\|:P \in \mathcal{P}(X,V)\right\}\\
\|P_m\|_w&=\inf\left\{\|P\|_w:P \in \mathcal{P}(X,V)\right\}.
\end{align*}
Note that
\begin{equation}
P_o(f)=u_1(f)v_1+u_2(f)v_2 {\hskip 1em}\text{and}{\hskip 1em} P_m(f)=z_1(f)v_1+z_2(f)v_2.
\end{equation}
Then it is easy to see that
\begin{align*}
u_1=z_1=&\left(-\frac{1}{2},0,\frac{1}{2}\right)\\
u_2=&\left(\frac{1-d}{2},d,\frac{1-d}{2}\right)\\
z_2=&\left(\frac{1-g}{2},g,\frac{1-g}{2}\right),
\end{align*}
and for $p=\dfrac{4}{3}$ it is possible to determine $g$ and $d$ to conclude $\|P_o\|=1.05251$ while $\|P_m\|_w=1.02751$, thus $\|P_o\|\neq \|P_m\|_w$.
\end{exa}
V. P. Odinec in \cite{Odinec}  (see also \cite{OL}, \cite{LE})
proves that minimal projections of norm greater than one from a three-dimensional Banach space onto any of its two-dimensional subspaces are unique.  Thus in the above example, the projection from $l_3^p$ onto a two-dimensional subspace not only proves the fact that $\|P_o\|\neq \|P_m\|_w$ for $p\neq 1,2,\infty$, here once again we have the uniqueness of the projections.
\section{Rudin's Projection and Numerical Radius}
One of the key thoerems on minimal projections is due to W. Rudin (\cite{Rudin2} and \cite{Rudin1})
The setting for his theorem is as follows.  $X$ is a Banach space and $G$ is a compact topological group.  Defined on $X$ is a set $\mathcal{A}$ of all bounded linear bijective operators in a way that $\mathcal{A}$ is algebraically isomorphic to $G$.  The image of $g \in G$ under this isomorphism will be denoted by $T_g$.  We will assume that the map $G \times X \to X$ defined as $(g,x) \mapsto T_gx$ is continuous.  A subspace $V$ of $X$ is called \emph{G-invariant} if $T_g(V) \subset V$ for all $g \in G$ and a mapping $S: X \to X$ is said to \emph{commute} with $G$ if $S \circ T_g=T_g \circ S$ for all $g \in G$.  In case $\|T_g\|=1$ for all $g \in G$, we say $g$ acts on $G$ by \emph{isometries}.
\begin{theorem} (\cite{Rudin1})
Let $G$ be a compact topological group acting by isomorphism on a Banach space $X$ and let $V$ be a complemented $G$--invariant subspace of $X$.  If there exists a bound projection $P$ of $X$ onto $V$, then there exists a bounded linear projection $Q$ of $X$ onto $V$ which commutes with $G$.
\end{theorem}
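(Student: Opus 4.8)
The plan is to produce $Q$ by \emph{averaging} $P$ over the group using the normalized Haar measure $\mu$ of the compact group $G$. Concretely, I would set
\begin{equation*}
Qx = \int_G T_g\, P\, T_g^{-1} x \; d\mu(g), \qquad x \in X,
\end{equation*}
understood as a Bochner (vector-valued) integral, and then verify in turn that $Q$ is well defined and bounded, that its range lies in $V$, that it restricts to the identity on $V$, and finally that it commutes with every $T_h$. The existence and normalization ($\mu(G)=1$) of a left-invariant Haar measure on a compact group is the standard fact I would invoke at the outset.

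For well-definedness I would first check that the integrand $g \mapsto T_g P T_g^{-1} x$ is a continuous map from $G$ into $X$. Since inversion is continuous on $G$ and the action $(g,x)\mapsto T_g x$ is assumed jointly continuous, the map $g \mapsto T_g^{-1} x = T_{g^{-1}} x$ is continuous; composing with the bounded operator $P$ and again with the joint action gives continuity of the full integrand. Its image is therefore a compact, hence bounded, subset of $X$, so the Bochner integral exists. To control the norm I would note that for each fixed $x$ the orbit $\{T_g x : g\in G\}$ is compact and so bounded; the uniform boundedness principle then yields $M := \sup_{g\in G}\|T_g\| < \infty$ (and likewise for the inverses), which gives the estimate $\|Q\| \le M^2\|P\|$. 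Because each value $T_g P T_g^{-1} x$ lies in $V$ (as $T_g^{-1}x$ is mapped into $V$ by $P$ and $V$ is $G$-invariant) and $V$ is a closed subspace, the integral $Qx$ again lies in $V$.

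Next I would verify that $Q$ is a projection onto $V$. For $v\in V$, $G$-invariance gives $T_g^{-1} v \in V$, whence $P T_g^{-1} v = T_g^{-1} v$ and therefore $T_g P T_g^{-1} v = v$; the integrand is the constant $v$, so $Qv = \mu(G)\,v = v$. Combined with $Q(X)\subset V$ this shows $Q$ is a bounded projection of $X$ onto $V$. For the commutation property, fix $h\in G$, pull the bounded operator $T_h$ inside the integral, and use $T_h T_g = T_{hg}$ together with the substitution $k=hg$; left-invariance of $\mu$ then gives
\begin{equation*}
T_h Q x = \int_G T_{hg} P T_g^{-1} x \; d\mu(g) = \int_G T_k P T_k^{-1} (T_h x)\; d\mu(k) = Q(T_h x),
\end{equation*}
so $Q$ commutes with $G$, as required.

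The main obstacle I anticipate is not the formal averaging computation --- which is dictated by translation invariance of the Haar measure --- but the analytic bookkeeping that makes the vector-valued integral legitimate: establishing the joint and inversion continuity of $g\mapsto T_g P T_g^{-1}x$, and, since $G$ acts by \emph{isomorphisms} rather than by isometries, securing the uniform bound $\sup_{g\in G}\|T_g\|<\infty$ via the uniform boundedness principle so that $Q$ is genuinely bounded and the closed subspace $V$ absorbs the integral.
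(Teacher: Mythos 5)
Your proposal is correct and takes essentially the same approach as the paper: the paper's proof sketch is precisely to define $Q(x)=\int_G\left(T_{g^{-1}}PT_g\right)(x)\,d\mu(g)$ by averaging over the Haar measure, which matches your construction up to the substitution $g\mapsto g^{-1}$ (harmless, since Haar measure on a compact group is inversion-invariant). The paper leaves the verification as an idea only; your checks of Bochner integrability, the uniform bound $\sup_{g\in G}\|T_g\|<\infty$ via the uniform boundedness principle, the range and identity-on-$V$ properties, and the left-invariance computation for commutation are exactly the standard details that complete it.
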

The idea behind the proof of the above theorem is to obtain $Q$ by averaging the operators $T_{g^{-1}}PT_g$ with respect to Haar measure $\mu$ on $G$.  i.e.,
\begin{equation}
Q(x):=\int_G\left(T_{g^{-1}}PT_g\right)(x)\, d\mu(g).
\end{equation}
\par
Now assume $X$ has a norm which contains the maps $\mathcal{A}$ to be \emph{isometries} and all of the hypotheses in Rudin's theorem are satisfied,  then one can claim the following stronger version of Rudin's theorem :
\begin{cor}
If there is a \emph{unique} projection $Q: X \to V$ which commutes with $G$, then for any $P \in \mathcal{P}(X,V)$, the projection
\begin{equation}
Q(x)=\int_G\left(T_{g^{-1}}PT_g\right)(x)\, d\mu(g),
\end{equation}
is a \emph{minimal projection} of $X$ onto $V$.
\end{cor}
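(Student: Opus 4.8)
The plan is to run Rudin's averaging argument while controlling the numerical radius rather than the operator norm, and to let the uniqueness hypothesis convert a pointwise estimate into a genuine minimality statement. For a fixed $P \in \mathcal{P}(X,V)$ let $Q_P := \int_G T_{g^{-1}} P T_g\, d\mu(g)$, understood as a Bochner integral in $B(X)$. By Rudin's theorem (the theorem quoted just above), each $Q_P$ is a bounded projection of $X$ onto $V$ that commutes with $G$. The uniqueness hypothesis then forces $Q_P = Q$ for every $P$, so that the single operator $Q$ is obtained by averaging an \emph{arbitrary} projection. The goal is therefore to show $\|Q\|_w \le \|P\|_w$ for every $P \in \mathcal{P}(X,V)$, after which minimality follows formally.

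The key step, and the one genuinely special to the numerical radius, is the invariance
\[
\|T_{g^{-1}} P T_g\|_w = \|P\|_w \qquad (g \in G).
\]
First I would prove this by a change of variables in the supremum defining $\|\cdot\|_w$. Given an admissible pair $(x,x^*)$ with $\|x\|=\|x^*\|=1$ and $x^*(x)=1$, set $y := T_g x$ and $y^* := x^* \circ T_{g^{-1}}$. Since the maps $T_g$ are isometries under the given norm and $T_g$ is invertible with inverse $T_{g^{-1}}$ (also an isometry), both $T_g$ and $T_{g^{-1}}$ are surjective isometries, so their adjoints are isometric as well; hence $\|y\|=\|x\|=1$, $\|y^*\| = \|x^*\| = 1$, and $y^*(y) = x^*(T_{g^{-1}} T_g x) = x^*(x) = 1$, so that $(y,y^*)$ is again admissible. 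Because $x^*\big((T_{g^{-1}} P T_g) x\big) = y^*(Py)$, taking suprema yields $\|T_{g^{-1}} P T_g\|_w \le \|P\|_w$; the reverse inequality follows by writing $P = T_g (T_{g^{-1}} P T_g) T_{g^{-1}}$ and repeating the argument. The only point to check with care is that conjugation by a surjective isometry preserves each of the three admissibility constraints, which is precisely what the computation above verifies.

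With the invariance in hand, I would estimate $Q = Q_P$ using that $\|\cdot\|_w$ is a seminorm dominated by the operator norm, hence a continuous seminorm on $B(X)$ satisfying the integral (Jensen) inequality for the Haar average: $\|Q\|_w \le \int_G \|T_{g^{-1}} P T_g\|_w\, d\mu(g) = \int_G \|P\|_w\, d\mu(g) = \|P\|_w$, the last equality because $\mu$ is a probability measure. As this holds for every $P \in \mathcal{P}(X,V)$ and $Q$ itself lies in $\mathcal{P}(X,V)$, we obtain $\|Q\|_w \le \inf\{\|P\|_w : P \in \mathcal{P}(X,V)\} \le \|Q\|_w$, so $Q$ attains the infimum and is a minimal projection with respect to numerical radius. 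I expect the main technical obstacle to be justifying the Jensen inequality rigorously, namely the Bochner measurability and integrability of $g \mapsto T_{g^{-1}} P T_g$ in $B(X)$; but this is already implicit in the hypotheses making $Q$ well defined in Rudin's theorem, since $\|T_{g^{-1}} P T_g\| \le \|P\|$ is uniformly bounded and the action is continuous. Finally, the identical argument with $\|\cdot\|_w$ replaced by the operator norm recovers the classical statement, so both notions of minimality are delivered by one and the same averaging.
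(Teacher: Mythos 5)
Your proposal is correct and, at its core, follows the same route as the paper: the paper's proofs of Theorem \ref{thm:star} and of the theorem following it (the numerical-radius analogues of this corollary) consist of exactly your two ingredients, namely the Haar-measure/uniqueness step forcing $Q_P=Q$ for every $P\in\mathcal{P}(X,V)$, and the observation that conjugation by the isometries $T_g$ carries admissible pairs to admissible pairs, i.e.\ $(T_gx,\;x^*\circ T_{g^{-1}})$ is admissible whenever $(x,x^*)$ is, so that averaging cannot increase the numerical radius (or the operator norm, which is what the corollary's ``minimal projection'' refers to, and which your closing remark covers). The one place where your write-up is looser than the paper is the decision to read $Q_P=\int_G T_{g^{-1}}PT_g\,d\mu(g)$ as a Bochner integral in $B(X)$ and then invoke Jensen's inequality at the operator level. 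The hypotheses only give joint continuity of $(g,x)\mapsto T_gx$; hence $g\mapsto (T_{g^{-1}}PT_g)(x)$ is norm-continuous in $X$ for each fixed $x$, but $g\mapsto T_{g^{-1}}PT_g$ is in general only continuous in the strong operator topology and need not be norm-measurable (already $g\mapsto T_g$ for the translation group on $C(2\pi)$ is nowhere norm-continuous and not essentially separably valued), so uniform boundedness alone does not make the $B(X)$-valued integral exist; your assertion that this is ``implicit in the hypotheses making $Q$ well defined'' is not accurate, since Rudin's $Q$ is defined by the strong (pointwise) integral. The repair costs nothing and is exactly the paper's argument: fix an admissible pair $(x,x^*)$, use the scalar identity $x^*(Qx)=\int_G x^*\bigl((T_{g^{-1}}PT_g)(x)\bigr)\,d\mu(g)$, and bound the integrand by $\|P\|_w$ via your change-of-variables computation; since $\mu$ is a probability measure this yields $\|Q\|_w\le\|P\|_w$, and verbatim with the operator norm it yields $\|Q\|\le\|P\|$, giving the minimality claimed.
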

\begin{theorem}
\label{thm:star}
(\cite{Aksoy-Lewicki1})
Let $\mathcal{A}$ be a set of all bounded linear bijective operators on $X$ such that $\mathcal{A}$ is algebraically isomorphic to $G$.  Suppose that all of the hypotheses of Rudin's theorem above are satisfied and the maps in $\mathcal{A}$ are isometries.  If $P$ is any projection in the numerical radius of $X$ onto $V$, then the projection $Q$ defined as 
\begin{equation}
Q(x)=\int_G \left(T_{g^{-1}}PT_g\right)(x) \, d\mu(g)
\end{equation}
satisfies $\|Q\|_w\leq \|P\|_w.$ 
\end{theorem}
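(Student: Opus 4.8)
The plan is to reduce the statement to two facts: that $\|\cdot\|_w$ is a seminorm dominated by the operator norm, hence continuous with respect to it, and that conjugation by a surjective isometry leaves the numerical radius unchanged. Granting these, the estimate follows by the usual averaging argument, carried out for $\|\cdot\|_w$ in place of $\|\cdot\|$. First I would record that $Q$ is well defined and is a bounded projection of $X$ onto $V$: this is exactly Rudin's theorem above under the present hypotheses, while the continuity of $(g,x)\mapsto T_g x$ together with compactness of $G$ ensures that $g\mapsto T_{g^{-1}}PT_g$ is a continuous $B(X)$-valued map, so that the Bochner integral defining $Q$ exists against the normalized Haar measure $\mu$, with $\mu(G)=1$.

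The core step is the claim that $\|T_{g^{-1}}PT_g\|_w \le \|P\|_w$ for every $g\in G$, which I would establish by a change of variables inside the supremum defining the numerical radius. Fix $g$ and take any admissible pair $(x,x^*)\in S_X\times S_{X^*}$ with $x^*(x)=1$. Set $y:=T_g x$ and $y^*:=T_{g^{-1}}^{*}x^*$. Since $T_g$ is an isometry, $\|y\|=\|x\|=1$; since $\|T_{g^{-1}}^{*}\|=\|T_{g^{-1}}\|=1$, we have $\|y^*\|\le 1$; and because $T_{g^{-1}}T_g=I$,
$$y^*(y)=x^*(T_{g^{-1}}T_g x)=x^*(x)=1,$$
which together with $\|y\|=1$ forces $\|y^*\|=1$ as well. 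Hence $(y,y^*)$ is again admissible, and
$$x^*\big((T_{g^{-1}}PT_g)x\big)=(T_{g^{-1}}^{*}x^*)(PT_g x)=y^*(Py).$$
Taking moduli and supremizing over all admissible $(x,x^*)$ gives $\|T_{g^{-1}}PT_g\|_w\le\|P\|_w$; repeating the argument with $g$ replaced by $g^{-1}$ yields equality, though only $\le$ is needed.

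Finally I would invoke the integral form of the triangle inequality for the seminorm $\|\cdot\|_w$. Because $\|\cdot\|_w\le\|\cdot\|$, the numerical radius is continuous in operator norm, so approximating the Bochner integral by Riemann sums and passing to the limit yields
$$\|Q\|_w=\Big\|\int_G T_{g^{-1}}PT_g\,d\mu(g)\Big\|_w\le\int_G\|T_{g^{-1}}PT_g\|_w\,d\mu(g)\le\int_G\|P\|_w\,d\mu(g)=\|P\|_w,$$
using $\mu(G)=1$ at the last step. I expect the conjugation-invariance step to be the conceptual heart of the argument: it is precisely where the isometry hypothesis and the two-sided constraint $x^*(x)=1$ built into the numerical radius are both used, in contrast with the operator-norm case where only submultiplicativity is needed. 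The one genuinely technical point will be justifying the passage of $\|\cdot\|_w$ through the integral, but this is routine given the domination by the operator norm.
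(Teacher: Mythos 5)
Your proposal is correct and follows essentially the same route as the paper: the heart of both arguments is the observation that, because the $T_g$ are isometries, the pair $\left(T_g x,\ x^*\circ T_{g^{-1}}\right)$ is again an admissible numerical-radius pair (unit vectors with $x^*\circ T_{g^{-1}}(T_g x)=x^*(x)=1$), so each conjugate $T_{g^{-1}}PT_g$ is controlled by $\|P\|_w$. The only difference is organizational: you isolate the operator-level inequality $\|T_{g^{-1}}PT_g\|_w\le\|P\|_w$ and then pass the seminorm through the Bochner integral, whereas the paper fixes an admissible pair, pulls $x^*$ inside the integral by linearity, and bounds the integrand pointwise --- thereby avoiding any need to justify the inequality $\left\|\int_G\cdot\,d\mu\right\|_w\le\int_G\|\cdot\|_w\,d\mu$.
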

\begin{proof}
Consider $\|Q\|_w=\sup\{|x^*(Qx)|:x^*(Qx)\in W(Q)\}$, where $W(Q)$ is 
 the numerical range of $Q$.   Notice that
\begin{align}
|x^*(Qx)|=&\left|x^*\int_G\left(T_{g^{-1}}PT_g\right)(x)\, d\mu(g)\right|\notag\\
\leq& \int_G\left|\left(x^* \circ T_{g^{-1}}\right)P(T_gx)\right|\, d\mu(g).
\end{align}
But $\|x\|=1$ and $\|x^*\|=1$ which implies that $\|T_gx\|=1$ and $\|x^*T_{g^{-1}}\|=1$, moreover, 
\begin{equation}
1=x^*(x)=x^*T_{g^{-1}}(T_gx)\implies |x^*(Qx)|\leq \|P\|_w.
\end{equation}
Consequently, $\|Q\|_w \leq \|P\|_w$ which proves $Q$ is a minimal projection in numerical radius.
\end{proof}
\begin{theorem}
(\cite{Aksoy-Lewicki1})
Suppose all  hypotheses of the above theorem are satisfied and that there is exactly one projection $Q$ which commutes with $G$.  Then $Q$ is a minimal projection with respect to numerical radius.
\end{theorem}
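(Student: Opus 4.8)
The plan is to show that $Q$ inherits its minimality from Theorem~\ref{thm:star} by exploiting the uniqueness hypothesis. The essential observation is that the averaging process $P \mapsto \int_G T_{g^{-1}} P T_g \, d\mu(g)$ always produces a projection of $X$ onto $V$ that commutes with $G$, so by uniqueness it must coincide with $Q$ no matter which projection $P$ we feed into it. Once this is established, Theorem~\ref{thm:star} immediately forces $Q$ to have the smallest possible numerical radius.

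First I would verify that for an arbitrary $P \in \mathcal{P}(X,V)$ the operator
$$Q_P(x) := \int_G \left(T_{g^{-1}} P T_g\right)(x)\, d\mu(g)$$
is again a projection of $X$ onto $V$. Since $V$ is $G$-invariant and closed (being complemented), each integrand $T_{g^{-1}} P T_g x$ lies in $V$, hence so does the vector-valued integral; and for $v \in V$ one has $T_g v \in V$, so $P T_g v = T_g v$ and the integrand collapses to $v$, giving $Q_P(v) = v$ because $\mu$ is a probability measure. Thus $Q_P \in \mathcal{P}(X,V)$.

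Next I would establish that $Q_P$ commutes with $G$. Fixing $h \in G$ and using the substitution $g \mapsto gh$ together with the invariance of Haar measure, one computes $Q_P(T_h x) = \int_G T_{g^{-1}} P T_{gh}\, x \, d\mu(g) = T_h \int_G T_{g^{-1}} P T_g x\, d\mu(g) = T_h\, Q_P(x)$, so $Q_P \circ T_h = T_h \circ Q_P$. By the hypothesis there is exactly one projection commuting with $G$; hence $Q_P = Q$ for every choice of $P$.

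With these two facts in hand the conclusion is immediate: given any $P \in \mathcal{P}(X,V)$, Theorem~\ref{thm:star} yields $\|Q_P\|_w \leq \|P\|_w$, and since $Q_P = Q$ we obtain $\|Q\|_w \leq \|P\|_w$. As $P$ was arbitrary this says $\|Q\|_w = \inf\{\|P\|_w : P \in \mathcal{P}(X,V)\}$, so $Q$ is minimal with respect to numerical radius. I expect the only delicate point to be the change-of-variables step in the Haar integral establishing commutativity, where right-invariance of $\mu$ and continuity of $(g,x) \mapsto T_g x$ must be invoked to justify passing $T_h$ through the integral; the remaining verifications that $Q_P$ is a genuine projection into the closed subspace $V$ are routine.
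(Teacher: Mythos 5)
Your proposal is correct and follows exactly the paper's own argument: average an arbitrary $P \in \mathcal{P}(X,V)$ over $G$, note that the resulting $Q_P$ commutes with $G$ (hence equals $Q$ by uniqueness), and apply Theorem~\ref{thm:star} to get $\|Q\|_w \leq \|P\|_w$. The paper compresses the verification that $Q_P$ is a projection commuting with $G$ into the phrase ``by the properties of Haar measure,'' whereas you spell out those routine checks; the substance is identical.
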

\begin{proof}
Let $P \in \mathcal{P}(X,V)$.  By the properties of Haar measure, $Q_p$ given in the above theorem commutes with $G$.  Since there is exactly one projection which commutes with $G$, $Q_p=Q$ and $\|Q\|_w\leq \|P\|_w$ as desired.
\end{proof}

\begin{rem}
In \cite{Aksoy-Lewicki1} it is shown that if $G$ is a compact topological group acting by isometries on a Banach space $X$ and if we let 
\begin{equation}
\psi:B(X) \to [0,+\infty],
\end{equation}
be a convex function which is lower semi-continuous in the strong operator topology and if one further assumes that 
\begin{equation}
\psi\left(g^{-1}\circ P \circ g\right)\leq \psi(P),
\end{equation}
for some $P \in B(X)$ and $g \in G$, then $\psi(Q_P) \leq \psi(P).$ 
This result leads to calculation of minimal projections not only with respect to numerical radius but also with respect to $p$-summing, $p$-nuclear and $p$-integral norms.
For details see \cite{Aksoy-Lewicki1}.
\end{rem}
\section{An Application}
Let $C(2\pi)$ denote the set of all continuous $2\pi$-periodic functions and $\Pi_n$ be the space of all trigonometric polynomials of order $\leq n$ (for $n\geq 1)$.
\par
The \emph{Fourier projection} $F_n:C(2\pi)\to \Pi_n$ is defined by 
\begin{equation}
F_n(f)=\sum_{k=0}^{2n}\left(\int_0^{2\pi}f(t)g_n(t)dt\right)g_k,
\end{equation}
where $\displaystyle (g_k)_{k=0}^{2n}$ is an orthonormal basis in $\Pi_n$ with respect to the scalar product 
\begin{equation}
\langle f,g \rangle=\int_0^{2\pi}f(t)g(t)dt.
\end{equation}
Lozinskii in \cite{Lozinskii} showed that $F_n$ is a minimal projection in $\mathcal{P}(C(2\pi),\Pi_n)$.  His proof is based on the equality which states that for any $f \in C(2\pi)$, $t \in [0,2\pi]$ and $P \in \mathcal{P}(C(2\pi),\Pi_n)$, we have 
\begin{equation}
F_nf(t)=\frac{1}{2\pi}\int_0^{2\pi}\left(T_{g^{-1}}PT_gf\right)(t)\, d\mu(g).
\end{equation}
Here $\mu$ is a Lebesgue measure and $(T_gf)(t)=f(t+g)$ for any $g \in \R$.  This equality is called Marcinkiewicz equality (\cite{Cheney} p.233).
\par
Notice that $F_n$ is the only projection that commutes with $G$, where $G=[0,2\pi]$ with addition mod $2\pi$.  In particular, $F_n$ is a minimal projection with respect to numerical radius.
\par
Since we know the upper and lower bounds on the operator norm of $F_n$, more precisely (\cite{Cheney} p.212)
\begin{equation}
\frac{4}{\pi^2}ln(n)\leq \|F_n\|\leq ln(n)+3.
\end{equation}
From the theorem (when minimal projections coincide) we know that in cases of $L^p$, $p=1,\infty$, the numerical radius projections and the operator norm projections are equal.  Since $C(2\pi) \in L^\infty,$ we also have lower and upper bounds for the numerical radius of Fourier projections, i.e.,
\begin{equation}
\frac{4}{\pi^2}ln(n)\leq \|F_n\|_w\leq ln(n)+3.
\end{equation}
\begin{rem}
Lozinskii's proof of the minimality of $F_n$ is based on Marcinkiewicz equality.  However, the Marcinkiewicz equality holds true if one replaces $C(2\pi)$ by $L^p[0,2\pi]$ for $1\leq p \leq \infty$ or Orlicz space $L^\phi[0,2\pi]$ equipped with Luxemburg or Orlicz norm provided $\phi$ satisfies the suitable $\Delta_2$ condition.  Hence, Theorem \ref{thm:star} can be applied equally well to numerical radius or norm in Banach operator ideals of p-summing, p-integral,p-nuclear operators generated by $L^p$-norm or the Luxemburg or Orlicz norm.   For further examples see \cite{Aksoy-Lewicki1}.
\end{rem}
\section{Strongly Unique Minimal Extensions}
In  \cite{Odinec}  (see also \cite{OL}) it is shown that a minimal projection of the operator norm greater than one from a three dimensional real Banach space onto any of its two dimensional subspace is the unique minimal projection with respect to the operator norm. Later in \cite{LE} this result is generalized as follows: 
\\ Let $X$ is a three dimensional real Banach space and $V$ be its two dimensional subspace. Suppose  $A \in B(V)$ is a fixed operator. Set
$$
\mathcal{P}_{A}(X,V)= \{P\in B(X,V) : \,\,\, P\mid_{V}=A\,\,\}
$$
and  assume $\parallel P_0\parallel> \parallel A \parallel$, if $P_o \in \mathcal{P}_{A} (X,V)$ is an extension of minimal operator norm. Then $P_o$ is {\it a strongly unique minimal extension with respect to operator norm}.\\ In other words there exists $ r>0$ such that for all $P \in \mathcal {P} _{A} (X,V)$ one has
$$\| P \|\geq \| P_o \| +\,r \,\| P-P_o\| . $$
\begin{defi}
We say an operator $A_o \in \mathcal{P}_A(X,V)$ is {\it a strongly unique minimal extension with respect to numerical radius} if there exists $r > 0$ such that
$$
\| B \|_w \geq \| A_o \|_w +\,r \,\| B-A_o\|_w
$$
for any $B \in \mathcal{P}_{A}(X,V).$
\end{defi}
A natural extension of the above mentioned results to the case of numerical radius $\| \cdot \|_{w}$  was given in \cite{Aksoy-Lewicki2}.
\begin{theorem}
\label{application}
(\cite{Aksoy-Lewicki2})
Assume that $X$ is a three dimensional real Banach space and let $V$ be its two dimensional subspace. Fix $A \in B(V)$ with $\| A \|_w >0.$ Let
$$
\lambda_{w}^{A} = \lambda_{w}^{A}(V,X) = \inf \{ \| B \|_w : B \in \mathcal{P}_A(X,V) \} > \|A \|,
$$ 
where $\|A\|$ denotes the operator norm.
Then there exist exactly one $A_o \in \mathcal{P}_A(X,V)$ such that $$\lambda_{w}^{A} = \| A_o \|_w .$$ Moreover, $A_o$ is the strongly minimal extension with respect to numerical radius. 
\end{theorem}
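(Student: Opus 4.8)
The plan is to reduce the statement to a two–dimensional convex minimization and then apply the Kolmogorov-type strong–uniqueness criterion, with the gap hypothesis $\lambda_w^A > \|A\|$ entering through the location of the diagonal extremal pairs. First I would fix $\delta \in S_{X^*}$ spanning the annihilator $V^\perp$, which is one–dimensional since $\dim X = 3$ and $\dim V = 2$, and fix one extension $A_1 \in \mathcal{P}_A(X,V)$. Every $B \in \mathcal{P}_A(X,V)$ is then uniquely $B = A_1 + \delta \otimes v$ with $v \in V$, because the difference space $\mathcal{D} = \{D \in B(X,V) : D|_V = 0\} = \{\delta \otimes v : v \in V\}$ is two–dimensional. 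Thus $\mathcal{P}_A(X,V) \cong \mathbb{R}^2$ and $\Phi(v) := \|A_1 + \delta \otimes v\|_w$ is convex and continuous. Writing $K_w = \{(x,x^*) \in S_X \times S_{X^*} : x^*(x) = 1\}$, which is compact since $X = X^{**}$ in finite dimension, we have $\|B\|_w = \max_{(x,x^*) \in K_w} |x^*(Bx)|$, so $\|\cdot\|_w$ is a supremum of moduli of linear functionals over a compact set, and the problem is best approximation of $A_1$ from $\mathcal{D}$ in this seminorm.

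Existence of a minimizer $A_o$ follows from coercivity of $\Phi$, for which it suffices that $\|\cdot\|_w$ restrict to a genuine norm on the plane $\mathcal{D}$ (a separate lemma, amounting to the statement that no nonzero rank–one $\delta \otimes v$ has vanishing numerical radius); granting this, convexity and continuity give an attained minimum on a compact sublevel set. For minimality and uniqueness I would use that $A_o$ minimizes $\Phi$ iff $0$ lies in the subdifferential, which here is the Kolmogorov condition that for every $D \in \mathcal{D}$,
\[
\max_{(x,x^*) \in \mathcal{E}_w(A_o)} \varepsilon(x,x^*)\, x^*(Dx) \ge 0,
\]
where $\mathcal{E}_w(A_o) = \{(x,x^*) \in K_w : |x^*(A_o x)| = \|A_o\|_w\}$ and $\varepsilon(x,x^*) = \operatorname{sign} x^*(A_o x)$. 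The standard equivalence from Singer's theory in $C(K_w)$ then states that $A_o$ is a strongly unique minimal extension with respect to $\|\cdot\|_w$ precisely when this holds with a uniform gap,
\[
r := \inf_{D \in \mathcal{D},\ \|D\|_w = 1}\ \max_{(x,x^*) \in \mathcal{E}_w(A_o)} \varepsilon(x,x^*)\, x^*(Dx) > 0.
\]
Identifying each pair with the functional $\ell_{x,x^*} : D \mapsto x^*(Dx)$ on $\mathcal{D}$, strong uniqueness holds iff $0$ lies in the \emph{interior} of $\operatorname{conv}\{\varepsilon(x,x^*)\,\ell_{x,x^*}\}$ in $\mathcal{D}^* \cong \mathbb{R}^2$, whereas minimality only guarantees $0$ lies in this convex hull.

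The decisive step, and the only place the hypothesis $\lambda_w^A > \|A\|$ is used, is to locate the extremal pairs: I claim every $(x,x^*) \in \mathcal{E}_w(A_o)$ has $x \notin V$. Indeed, if $x \in V$ then $A_o x = Ax$, and $|x^*(A_o x)| = |x^*(Ax)| \le \|Ax\| \le \|A\|\,\|x\| = \|A\|$, forcing $\|A_o\|_w = \lambda_w^A \le \|A\|$ and contradicting the hypothesis. Since $x \notin V$ means $\delta(x) \neq 0$ and $\ell_{x,x^*}(\delta \otimes v) = \delta(x)\, x^*(v)$, every extremal pair contributes a \emph{nonzero} functional on $\mathcal{D}$, proportional to the direction $x^*|_V \in V^*$. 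This already shows the functionals $\ell_{x,x^*}$ do not all vanish, which closes the existence gap (they span enough of $\mathcal{D}^*$ to make $\|\cdot\|_w$ a norm on $\mathcal{D}$) and, once $r > 0$ is in hand, yields uniqueness immediately.

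The main obstacle is upgrading ``$0 \in \operatorname{conv}$'' to ``$0 \in \operatorname{int}\operatorname{conv}$'' in the plane, that is, excluding the boundary configuration in which all signed functionals $\varepsilon(x,x^*)\,\ell_{x,x^*}$ lie in a closed half–plane whose bounding line passes through the origin. Such a configuration is witnessed by some $D_0 = \delta \otimes v_0 \neq 0$ with $\varepsilon(x,x^*)\, x^*(D_0 x) \le 0$ on $\mathcal{E}_w(A_o)$, and it is consistent with the Kolmogorov condition alone, so it cannot be ruled out by minimality by itself; moving along $A_o + tD_0$ only decreases the values $\varepsilon(x,x^*)\,x^*((A_o+tD_0)x)$ weakly, and the pairs lying exactly on the bounding line remain at level $\|A_o\|_w$, producing a potential flat direction. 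The plan is to exclude this by exploiting the two–dimensionality of $\mathcal{D}$ together with the fact, just proved, that every extremal $x$ stays uniformly off $V$ (so $\delta(x)$ is bounded away from $0$ on $\mathcal{E}_w(A_o)$): using upper semicontinuity of the extremal set and a perturbation argument on $K_w$ along $D_0$, one shows that the surviving boundary pairs cannot keep the numerical radius pinned at $\|A_o\|_w$ while $A_o + tD_0$ remains a minimizer, forcing the signed functionals to surround the origin. This delicate low–dimensional argument, adapting the operator–norm scheme of Odinec and Lewicki to the quadratic nature of $\|\cdot\|_w$, is the heart of the proof; once $r > 0$ is secured, the inequality $\|B\|_w \ge \|A_o\|_w + r\,\|B - A_o\|_w$ holds for all $B \in \mathcal{P}_A(X,V)$, and both uniqueness and strong uniqueness follow at once.
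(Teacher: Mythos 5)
A preliminary caveat: this survey states Theorem \ref{application} without proof, referring the reader to \cite{Aksoy-Lewicki2}, so your proposal has to be judged against what a complete argument requires rather than against a proof printed here. Your scaffolding is correct and is the standard reduction: since $\dim X=3$ and $\dim V=2$, the set $\mathcal{P}_A(X,V)$ is the affine plane $A_1+\mathcal{D}$ with $\mathcal{D}=\{\delta\otimes v: v\in V\}$; in finite dimension $\|B\|_w=\max\{|x^*(Bx)|:(x,x^*)\in K_w\}$ with $K_w$ compact; first-order convex analysis (Danskin/Singer) makes minimality of $A_o$ equivalent to $0\in\operatorname{conv}\{\varepsilon(x,x^*)\,\ell_{x,x^*}:(x,x^*)\in\mathcal{E}_w(A_o)\}$ and strong unicity equivalent to $0$ being interior to this hull in $\mathcal{D}^*\cong\R^2$; and the hypothesis $\lambda_w^A>\|A\|$ enters exactly where you put it, since an extremal pair with $x\in V$ would force $\lambda_w^A=|x^*(A_ox)|=|x^*(Ax)|\le\|A\|$.

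The genuine gap is that the step you yourself call decisive --- excluding the configuration in which all signed functionals lie in a closed half-plane through the origin --- is never proved, and that step is not a technical remainder: it is the theorem. The sketch you offer (upper semicontinuity of $\mathcal{E}_w$ plus a perturbation along $D_0$) names no mechanism. From $\varepsilon(x,x^*)\,x^*(D_0x)\le 0$ on $\mathcal{E}_w(A_o)$ one gets only that $t\mapsto\|A_o+tD_0\|_w$ has vanishing right derivative at $t=0$, which is compatible both with $A_o$ remaining the unique (but not strongly unique) minimizer and with a flat segment of minimizers; no contradiction is in sight without further input. That no soft argument can close this is demonstrated by the paper's own Remark \ref{dim4}: for $X=l_\infty^{(n)}$, $n\ge 4$, and $V=\ker f$ one has $\lambda_w^{id_V}=\lambda(V,X)>1=\|id_V\|$, so the gap hypothesis holds and your extremal-pairs-off-$V$ argument (which uses nothing but that hypothesis) goes through verbatim, yet two distinct minimal projections exist and even plain uniqueness fails; Remark \ref{complex} does the same for complex scalars. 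So whatever rules out the boundary configuration must exploit $\dim_{\R}X=3$, that is $\dim\mathcal{D}=2$, in a quantitative way; this is precisely the long analysis of extremal-pair configurations carried out in \cite{Aksoy-Lewicki2}, in the tradition of \cite{Odinec}, \cite{LE} and \cite{OL}, and your proposal replaces it with a promissory note. Two smaller defects: your existence step is circular, since the lemma that $\|\cdot\|_w$ is a norm on $\mathcal{D}$ is invoked before a minimizer exists but justified afterwards via the extremal pairs of that minimizer, and in any case nonvanishing of each $\ell_{x,x^*}$ on $\mathcal{D}$ does not imply that these functionals span $\mathcal{D}^*$. The lemma needs an independent proof (if $\|\delta\otimes v\|_w=0$ then every functional supporting $B_X$ at a point with $\delta(x)\ne 0$ annihilates $v$, and such functionals span $X^*$, since otherwise the support functions of $B_X$ and of $B_X\cap V$ would coincide on a dense set of directions, giving $B_X=B_X\cap V$); it is also what converts $\|B-A_o\|_w=0$ into $B=A_o$ at the end, so it cannot be dispensed with.
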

Notice that if we take $ A = id_V$ then $ \|A\|_w = \|A\| =1.$ In this case Theorem (\ref{application}) reduces to the following theorem:
\begin{theorem}
\label{projections}
(\cite{Aksoy-Lewicki2})
Assume that $X$ is a three dimensional real Banach space and let $V$ be its two dimensional subspace. Assume that
$$
\lambda^{id_V}_w > 1.
$$
Then there exist exactly one $P_o \in \mathcal{P}(X,V)$ of minimal numerical radius. Moreover , $P_o$ is a strongly unique minimal projection with respect to numerical radius. In particular $P_o$ is the only one minimal projection with respect to the numerical radius.
\end{theorem}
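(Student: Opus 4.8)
The plan is to obtain this statement as a direct specialization of Theorem \ref{application}, taking $A = id_V$; the whole task then reduces to checking that the hypotheses of Theorem \ref{application} are met and reading off its conclusion. First I would identify the constraint sets. By definition $\mathcal{P}_{id_V}(X,V) = \{P \in B(X,V) : P|_V = id_V\}$, and an operator $P : X \to V$ with $P|_V = id_V$ is exactly a projection of $X$ onto $V$; hence $\mathcal{P}_{id_V}(X,V) = \mathcal{P}(X,V)$, so the infimum $\lambda_w^{id_V}$ runs precisely over the projections we care about.

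Next I would verify the two quantitative hypotheses of Theorem \ref{application}. Since $W(id_V) = \{1\}$, we have $\|id_V\|_w = 1 > 0$, which supplies the requirement $\|A\|_w > 0$. Moreover $\|id_V\| = 1$ in the operator norm, so the threshold hypothesis $\lambda_w^A > \|A\|$ of Theorem \ref{application} becomes $\lambda_w^{id_V} > 1$ --- exactly the standing assumption of the present theorem. Thus all hypotheses of Theorem \ref{application} hold with $A = id_V$.

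With the hypotheses in force, Theorem \ref{application} applies verbatim: it yields a unique $A_o \in \mathcal{P}_{id_V}(X,V) = \mathcal{P}(X,V)$ with $\lambda_w^{id_V} = \|A_o\|_w$, which I rename $P_o$, and it provides a constant $r > 0$ such that $\|B\|_w \geq \|P_o\|_w + r\,\|B - P_o\|_w$ for every $B \in \mathcal{P}(X,V)$. This is precisely strong uniqueness of $P_o$ with respect to the numerical radius. The final assertion, that $P_o$ is the unique minimal projection, is then automatic: any competing minimizer $B$ satisfies $\|B\|_w = \|P_o\|_w$, so the strong uniqueness inequality forces $r\,\|B - P_o\|_w \leq 0$ and hence $B = P_o$.

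The step carrying all the actual content is the strong uniqueness furnished by Theorem \ref{application}, whose proof is the genuine obstacle were one to argue from scratch; here, however, that result is available, so the remaining work is purely bookkeeping --- confirming $\|id_V\| = \|id_V\|_w = 1$ and that the constraint $P|_V = id_V$ describes exactly the projections onto $V$. The one minor point worth double-checking is the convention under which $\|B\|_w$ is computed for $B : X \to V$, namely as the numerical radius of $B$ regarded as an operator on $X$ via $V \subset X$; this is the same convention used in Theorem \ref{application}, so no discrepancy arises in the reduction.
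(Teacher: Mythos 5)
Your proposal is correct and takes exactly the paper's route: the paper obtains Theorem \ref{projections} by specializing Theorem \ref{application} to $A = id_V$, via the same observations that $\mathcal{P}_{id_V}(X,V) = \mathcal{P}(X,V)$ and $\|id_V\|_w = \|id_V\| = 1$, so that the hypothesis $\lambda_w^A > \|A\|$ becomes $\lambda_w^{id_V} > 1$. The only caveat is your closing re-derivation of uniqueness from strong uniqueness, which tacitly assumes that $\|B - P_o\|_w = 0$ forces $B = P_o$ (not automatic, since $\|\cdot\|_w$ is in general only a seminorm); this is harmless here because Theorem \ref{application} already asserts that the minimizer is unique, which you also invoke.
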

\begin{rem}
\label{normone}
(\cite{Aksoy-Lewicki2})
Notice that in Theorem (\ref{application}) the assumption that $ \|A\| < \lambda_{w}^{A}$ is essential. Indeed, let $ X= l_{\infty}^{(3)},$ $ V =\{x \in X: x_1 +x_2=0\}$ and
$ A= id_V.$ Define
$$
P_1x = x -(x_1+x_2)(1,0,0)
$$
and
$$
P_2x = x -(x_1+x_2)(0,1,0).
$$
It is clear that
$$
\|P_1\| =\|P_1\|_w = \|P_2\| = \|P_2\|_w=1
$$
and $ P_1 \neq P_2.$ Hence, there is no strongly unique minimal projection with respect to numerical radius in this case.
\end{rem}
\begin{rem}
\label{dim4}
(\cite{Aksoy-Lewicki2})
Theorem (\ref{projections}) cannot be generalized for real spaces $X$ of dimension $ n \geq 4.$ Indeed let $ X= l_{\infty}^{(n)},$ and let $ V = ker(f),$
where $ f=(0,f_2,...,f_n) \in l_1^{(n)}$ satisfies $f_i>0$ for $i=2,...,n,$  $ \sum_{i=2}^n f_i=1$ and $ f_i <1/2 $ for $i=1,...,n.$ It is known
(see e.g.  \cite{bl}, \cite{OL}) that in this case
$$
\lambda(V,X)= 1 + (\sum_{i=2}^n f_i/(1-2f_i))^{-1} >1,
$$
where 
$$
\lambda(V,X)= \inf \{ \|P \|: P \in \mathcal(X,V)\}.
$$
By \cite{Aksoy-Chalmers}, $ \lambda(V,X) = \lambda_w^{id_V}.$ Define for $ i=1,...,n$ $ y_i= (\lambda(V,X)-1)(1-2f_i).$
Let $ y =(y_1,...,y_n)$ and $ z=(0,y_2,...,y_n).$ Consider mappings $P_1, P_2$ defined by
$$
P_1x = x-f(x)y
$$
and
$$
P_2x= x - f(x)z
$$
for $x \in l_{\infty}^{(n)}.$
It is easy to see that $ P_i \in \mathcal{P}(X,V),$ for $i=1,2,$ $ P_1 \neq P_2.$ By (\cite{OL} p. 104) $ \|P_i\| = \|P_i\|_w = \lambda(V,X) = \lambda_w^{id_V}.$
for $i=1,2.$
\end{rem}
\begin{rem}
\label{complex}
Theorem(\ref{projections}) is not valid for complex three dimensional spaces. For details see \cite{Aksoy-Lewicki2}.
\end{rem}
For Kolmogorov  type criteria concerning approximation with respect to numerical radius, we  refer the reader to  \cite{Aksoy-Lewicki2}.

{\vskip 1em}
Asuman G\"{u}ven Aksoy\\
Department of Mathematics\\
Claremont McKenna College\\
Claremont, CA, 91711, USA\\
e-mail: aaksoy@cmc.edu
{\vskip 1em}
\noindent Grzegorz Lewicki\\
Dept. of Math and Comp. Sci.\\
Jagiellonian University\\
Lojasiewicza 6\\
30-348 Krak\'ow, Poland\\
e-mail:Grzegorz.Lewicki@im.uj.edu.pl

\begin{thebibliography}{10}

\bibitem{Aksoy-Chalmers}
A. G. Aksoy and B. L. Chalmers, \emph{Minimal numerical radius extensions of
  operators}, Proc. Amer. Math. Soc. \textbf{135} (2007), 1039--1050.

\bibitem{Aksoy-Lewicki2}
A. G. Aksoy and G. Lewicki, \emph{Best approximation in numerical radius}, Numer.
  Funct. Anal. Optim. \textbf{32} (2011), 593--609.

\bibitem{Aksoy-Lewicki1}
A. G. Aksoy and G. Lewicki, \emph{Minimal projections with respect to various norms}, Studia Math.
  \textbf{210} (2012), 1--16.

\bibitem{Aksoy-Lewicki3}
A. G. Aksoy and G. Lewicki, \emph{Limit theorems for the numerical index}, J. Math. Anal. Appl.
  \textbf{398} (2013), 296--302.

\bibitem{bl} J. Blatter and E. W. Cheney, \textit{Minimal projections onto hyperplanes in sequence spaces,} Ann. Mat. Pura Appl., Vol. 101, (1974), 215 -227.

\bibitem{Bonsall-Duncan2}
F. F. Bonsall and J.~Duncan, \emph{Numerical ranges II}, London Math. Soc
  Lecture Nq
  ote Ser. 10, Cambridge Univ. Press, 1971.

\bibitem{Bonsall-Duncan1}
F. F. Bonsall and J.~Duncan, \emph{Numerical ranges of operators on normed spaces and of elements
  of normed algebras}, London Math Soc. Lecture Note Ser. 2, Cambridge Univ.
  Press, 1971.

\bibitem{Chalmers-Metcalf}
B. L. Chalmers and F.T. Metcalf, \emph{Determination of minimal projection from
  $C[-1,1]$ onto the quadratics}, Numer. Funt. Anal. Optim. \textbf{11} (1990),
  1--10.

\bibitem{Cheney}
E. W. Cheney, \emph{Introduction to approximation theory}, McGraw-Hill, 1966.

\bibitem{Cheney-Morris}
E. W. Cheney and P. D. Morris, \emph{On the existence and characterization of
  minimal projections}, J. Reine Angew. Math \textbf{279} (1974), 61--76.

\bibitem{Gustafson-Rao}
K. E.  Gustafson and D. K. M. Rao, \emph{Numerical range: The field of values of
  linear operators and matrices}, Springer, 1997.

\bibitem{LE} G. Lewicki, \textit{Strong unicity criterion in some spaces of operators,} Comment. Math. Univ. Carolinae, Vol. 34,1, (1993), 81 -87.

\bibitem{Lozinskii}
S. M. Lozinskii, \emph{On a class of linear operators}, Dokl. Akad. Nauk SSSR
  \textbf{61} (1948), 193--196.

\bibitem{lumer}
G.Lumer, \emph{Semi-inner-product spaces}, Trans. Amer. Math. Soc.
  \textbf{100} (1961), 29--43.

\bibitem{Martin}
M. Martin, \emph{A survey on the numerical index of Banach space}, Extracta
  Math. \textbf{15} (200), 265--276.

\bibitem{Martin-Meri-Popov}
M. Martin, J. Meri, and M. Popov, \emph{On the numerical index of
 $ L^p(\mu)$-spaces}, Israel J. Math. \textbf{184} (2011), 183--192.

\bibitem{OL} W. Odyniec, G. Lewicki \emph{Minimal Projections in Banach spaces}, Lecture Notes in Mathematics, 1449, Springer-Verlag, 1990.

\bibitem{Odinec}
V. P. Odinec, \emph{The uniqueness of minimal projections in Banach spaces}, Dokl.
  Akad. Nauk SSSR \textbf{220} (1975), 779--781.
.

\bibitem{Fisher-Morris-Wulbert}
P. D. Morris S. D. ~Fisher and D. E. Wulbert, \emph{Unique minimal of Fourier
  projections}, Trans. Amer. Math. Soc. \textbf{265} (1981), 235--246.

\bibitem{Singer}
I. Singer, \emph{Best approximation in normed linear spaces by elements of
  linear subspaces}, Springer, 1970.

\bibitem{Rudin2}
W. Rudin, \emph{Functional analysis}, McGraw-Hill, 1974

\bibitem{Rudin1}
W. Rudin, \emph{Projections onto invariant subspaces}, Proc. Amer. Math. Soc.
  \textbf{13} (1962), 429--432.

\end{thebibliography}
\end{document}